\def\C{\mathbb{C}}
\def\zplus{\mathbb{Z}^+}
\def\th{^\text{th}}
\def\epsilon{\varepsilon}
\def\phi{\varphi}
\def\diag{\text{Diag}}
\def\st{\; | \;}
\DeclareMathOperator{\im}{im}
\DeclareMathOperator{\ind}{Ind}
\DeclareMathOperator{\coker}{coker}
\let\dim\relax
\DeclareMathOperator{\dim}{Dim}
\def\cfm #1{\text{CFM(#1)}}
\def\rcfm #1{\text{B}(#1)}
\newtheorem{thm}{Theorem}[section]
\newtheorem{prop}[thm]{Proposition}
\newtheorem{cor}[thm]{Corollary}
\newtheorem{lemma}[thm]{Lemma}
\theoremstyle{definition}
\newtheorem{ex}[thm]{Example}
\newtheorem{defn}[thm]{Definition}
\newtheorem{rmk}[thm]{Remark}
\theoremstyle{remark}
\begin{document}

\title{A Family of Almost Invertible Infinite Matrices}
\author{Daniel P. Bossaller}

\address{Department of Mathematics and Computer Science, John Carroll University, University Heights, OH, 44118}
\email{dbossaller@jcu.edu}

\maketitle

\begin{abstract}
An algebraic analogue of the family of Fredholm operators is introduced for the family of row and column finite matrices, dubbed ``Fredholm matrices." In addition, a measure is introduced which indicates how far a Fredholm matrix is from an invertible matrix. It is further shown that this measure respects multiplication, is invariant under perturbation by a matrix from $M_\infty(K)$, and is invariant under conjugation by an invertible row and column finite matrix.
\end{abstract}

\section{Introduction}
In the study of rings and algebras, the family of invertible elements is of keen interest, mainly because the presence (or lack) of units is the primary dividing line between rings and fields. As such, many articles have been written studying this family of elements and various generalizations of invertibility. In many cases these articles have opened up new lines of inquiry in ring theory. For example, in \cite{onesidedinverses}, Jacobson studied elements which were one-sided invertible, that is, there exists elements $x$ and $y$ such that $xy = 1$ but $yx \neq 1$. From this work grew the study of directly finite algebras (where every element is left and right invertible) and one of the most well-known open problems in non-commutative ring theory, the Kaplansky Direct Finiteness Conjecture: Every group algebra $KG$ is directly finite, \cite{kapfield}. 

Another example of the primacy of invertibility and its generalizations can be found in the theory of von Neumann regular rings. These are rings where every element $x$ satisfies the following weak invertiblity property: there is an element $y$ such that $xyx = x$. This is broader set of objects than the one-sided invertible elements. In fact, one can see that any one-sided invertible element is von Neumann regular. However, the converse is not true; any idempotent is von Neumann regular but not necessarily one-sided invertible, for example. See \cite{goodearl} for a wide survey on this topic. Notions of invertibility related to von Neumann regularity include strong regularity (also called local invertibility by the author in \cite{BossallerLopez2}), $\pi$-regularity, Drazin inverses, and Moore-Penrose inverses. See \cite{generalizedinverses} for a treatment of the latter two topics and many other invertibility-adjacent topics.

This note introduces a family of nearly invertible infinite matrices which is very closely tied to the one-sided invertibility of Jacobson. These ``algebraically Fredholm" matrices are inspired by their Fredholm operator counterparts in the theory of $C^*$-algebras. These are the bounded linear operators $T$ such that $T + K$ is invertible for some compact operator $K$. For an introduction and explanation of the importance of Fredholm operators functional analysis see \cite{schechter}, many of the results of the present note are inspired by Schechter's treatment of this topic. In the purely algebraic case, i.e. removing the notion of convergence, the analogue of the set of compact linear operators on a Hilbert space $\mathcal H$, $\mathcal K(\mathcal H)$, is the set of infinite matrices indexed by $\zplus \times \zplus$ over the field $\C$ with only finitely many nonzero entries, which will be denoted by $M_\infty(\C)$. It can be shown that $\mathcal K(\mathcal H)$ is the completion, in the appropriate norm of $M_\infty(\C)$. In a similar way, the algebraic analogue of the bounded operators are the row and column finite matrices, $\rcfm \C$. These are the infinite matrices over $\C$ where each row and column has only finitely many nonzero entries. It is straightforward to see that $M_\infty(\C)$ is an ideal of $\rcfm \C$. Furthermore, as shown in \cite{regmul}, $\rcfm \C$ is the largest unital $\C$-algebra which contains $M_\infty(\C)$ as an essential ideal, that is, for any nonzero ideal $J$ has non-trivial intersection with $M_\infty(\C)$. We define the quotient algebra 
\[Q(\C) = \rcfm \C/M_\infty(\C).\]
The results of this article do not require the fact that $\C$ is algebraically closed. Due to this, we assume that the entries of our matrices come from some field $K$ of characteristic zero.

\section{Algebraic Fredholm Theory}
In many articles concerning the classification of extensions of $C^*$-algebras, for instance \cite{paschkesalinas}, the Fredholm index of an operator is used to parameterize the various extensions of an algebra $A$ by $B$. The goal of this article is to introduce an algebraic version of this index by examining matrices $A \in \rcfm K$ whose image under the canonical surjection $\pi: \rcfm K \mapsto \rcfm K / M_\infty(K)$ is invertible.

First, let us establish the following, easily verified facts about $M_\infty(K)$. Suppose that $A \in \rcfm K$; we adopt the notation that $\mathcal L_A$ and $\mathcal R_A$ denote the linear transformations (on a countably infinite dimensional, $K$-vector space $V$), $\mathcal L_A(x) = Ax$ and $\mathcal R_A(y) = yA$ for a column vector $x$ and row vector $y$.

\begin{lemma}
Let $A$ be any infinite matrix.
\begin{enumerate}
\item $A \in M_\infty(K)$ if, and only if, $\mathcal L_A$ and $\mathcal R_A$ are linear transformations of $V$ and  $\dim(\im \mathcal L_A) < \infty$ and $\dim(\im \mathcal R_A) < \infty$. 
\item$M_\infty(K)$ is the unique minimal ideal of $\rcfm K$.
\end{enumerate}
\end{lemma}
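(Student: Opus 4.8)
The plan is to handle the two parts separately, reducing each to an explicit statement about supports and matrix units. For the forward direction of part (1), I would assume $A \in M_\infty(K)$, so $A$ has only finitely many nonzero entries and hence only finitely many nonzero rows and columns. I would first verify that $\mathcal{L}_A$ and $\mathcal{R}_A$ really map $V$ into $V$: for a finitely supported $x$, the vector $Ax$ can be nonzero only in the finitely many rows where $A$ has a nonzero entry, so $Ax \in V$, and symmetrically $yA \in V$. Since $\im \mathcal{L}_A$ is the span of the columns of $A$ and only finitely many columns are nonzero, $\dim(\im \mathcal{L}_A) < \infty$, and the same argument on rows gives $\dim(\im \mathcal{R}_A) < \infty$.

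For the converse I would first note that the hypothesis is precisely what pins $A$ into $\rcfm K$: since $\mathcal{L}_A(e_j)$ is the $j$th column of $A$, requiring it to lie in $V$ forces every column to be finitely supported, and symmetrically $\mathcal{R}_A$ forces every row finite. The crux is then a support-confinement argument. Because $\im \mathcal{L}_A$ is the span of the columns and is finite-dimensional, I can select finitely many columns of $A$ that span it; each is finitely supported, so together their supports meet only a finite set $R$ of rows. As every column is a linear combination of the chosen ones, every column is supported in $R$, so all nonzero entries of $A$ lie in the rows of $R$. Running the symmetric argument on $\mathcal{R}_A$ yields a finite set $C$ of columns containing every nonzero entry. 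Hence all nonzero entries lie in $R \times C$, a finite set, and $A \in M_\infty(K)$.

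For part (2), the key claim is that every nonzero two-sided ideal $J$ of $\rcfm K$ contains $M_\infty(K)$; granting this, $M_\infty(K)$ (a nonzero ideal) can properly contain no nonzero ideal and must equal any minimal ideal, so it is the unique minimal ideal. Since $M_\infty(K)$ is the $K$-span of the matrix units $E_{ij}$, it suffices to show each $E_{ij} \in J$. Picking $0 \neq A \in J$ with $A_{pq} \neq 0$, a direct computation gives $E_{ip} A E_{qj} = A_{pq} E_{ij}$; as $J$ is two-sided and $E_{ip}, E_{qj} \in \rcfm K$, this element lies in $J$, and dividing by $A_{pq} \in K^\times$ yields $E_{ij} \in J$ for all $i,j$. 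Thus $M_\infty(K) \subseteq J$.

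The only step requiring real thought is the converse of part (1): the main obstacle is recognizing that finite-dimensionality of the image should be used to bound the number of nonzero \emph{rows} (and, symmetrically, columns) rather than the number of nonzero entries directly, after which row and column finiteness forces the entire support to be finite. Everything in part (2) collapses to the routine matrix-unit identity, and the reduction of ``unique minimal ideal'' to ``contained in every nonzero ideal'' is purely formal.
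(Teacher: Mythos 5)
Your proposal is correct and, for part (2), uses essentially the same idea as the paper: a nonzero element of an ideal yields every matrix unit via the identity $E_{ip}AE_{qj}=A_{pq}E_{ij}$, from which minimality and uniqueness both follow (you package this slightly more efficiently by showing every nonzero ideal of $\rcfm K$ contains $M_\infty(K)$ outright, whereas the paper argues minimality within $M_\infty(K)$ first and then uniqueness via intersection). For part (1) the paper gives no details beyond ``straightforward calculation,'' and your support-confinement argument is a valid way to fill that in.
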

\begin{proof}
The first is established through straightforward calculation, then it follows that $M_\infty(K)$ is an ideal of $\rcfm K$. To show minimality of $M_\infty(K)$, suppose there were an ideal $\{0\} \neq I \subsetneq M_\infty(K)$. Then there must be some nonzero element $a \in I$ such that $\rcfm K a \rcfm K \neq \{0\}$ for otherwise, $\mathcal L_a(x) = 0$ and $\mathcal R_a(x) = 0$ for all $x \in \rcfm K$, which implies that $a = 0$ since $I_\infty(K) \in \rcfm K$. Write $a$ in terms of the matrix units of $M_\infty(K)$: \[a = k_{ij} e_{ij} + \sum_{mn} k_{mn} e_{mn}\] such that $m \neq i$ or $n \neq j$. Then any matrix unit $e_{kl}$ may be written as $e_{kl} = e_{ki}ie_{jl}$. Thus the two-sided ideal generated by $a$ must equal $M_\infty(K)$. Now suppose that there were another minimal ideal $J$ of $\rcfm K$. Let $a \in J$. By construction, there is at least one $e_{ii} \in M_\infty(K)$ such that $e_{ii}a \neq 0$, thus $e_{ii}a \in M_\infty(K) \cap J$, which is an ideal of $\rcfm K$ contained in $M_\infty(K)$. This is a contradiction of the minimality of $M_\infty(K)$ unless $J = M_\infty(K)$.
\end{proof}

\begin{defn}
A matrix $A \in \rcfm K$ is called {\bf algebraically Fredholm} if the image of $A$ under the natural surjection $\pi: \rcfm K \rightarrow Q(K)$ is invertible in $Q(K)$. In the remainder of this article, we will refer to such matrices simply as ``Fredholm matrices."
\end{defn}

In other words, $A$ is Fredholm if and only if  there exists $A_1, A_2 \in \rcfm K$ and $S_1, S_2 \in M_\infty$ such that $AA_1 = I_\infty - S_1$ and $A_2 A = I_\infty - S_2$. A consequence of this characterization: since $A_1$ and $A_2$ differ only by some element $R \in M_\infty(K)$, we can select some $A_0$ as which functions as both a left and a right Fredholm inverse. That is, there exists $A_0$, $S_1$ and $S_2$ such that $A A_0 = I_\infty + S_1$ and $A_0 A = I_\infty + S_2$. Note that this also implies that Fredholm inverses are unique up to perturbation by some element of $M_\infty(K)$; when we say ``the" Fredholm inverse of a matrix, it is understood within this context.

\begin{prop}
The family of Fredholm matrices is closed under multiplication.
\end{prop}
\begin{proof}
Say that $A$ and $B$ are matrices such that $\overline A$ and $\overline B$ are invertible in $Q(K)$. Suppose $A$ has Fredholm inverse $A_0$ and $B$ has Fredholm inverse $B_0$. Then consider $\overline{AB} = \bar{A} \bar{B}$ which is clearly invertible in $Q(K)$ with Fredholm inverse $\bar{B_0}\bar{A_0}$.
\end{proof}

In the theory of Banach algebras, Fredholm operators are defined as those operators $T$ with closed range such that $\dim(\ker(T))$ and $\dim(\ker(T'))$ are finite (where $T'$ is the Hilbert space adjoint of $T$). The following result establishes that if a matrix is Fredholm, then $\dim(\ker(T))$ and $\dim(V/TV)$ are finite. Furthermore, using this fact we will introduce the ``index" of a matrix which will function as a measurement for how far a given Fredholm matrix is from being invertible. In this article, we will use the notation $\im(A)$, $\ker(A)$, and $\coker(A)$ for the image, kernel, and cokernel of the linear transformation $\mathcal L_{A}$.

\begin{lemma}\label{fredholm dimension}
Let $A \in \rcfm K$ be a Fredholm matrix. Then $\ker(A)$ and $\coker(A)$ are finite dimensional subspaces of $V$.
\end{lemma}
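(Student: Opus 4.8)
The plan is to use the concrete characterization of a Fredholm matrix noted just before the statement: there is a single matrix $A_0 \in \rcfm K$ and matrices $S_1, S_2 \in M_\infty(K)$ with $A_0 A = I_\infty + S_2$ and $A A_0 = I_\infty + S_1$. Passing to the associated linear transformations on $V$, these identities read $\mathcal L_{A_0} \mathcal L_A = \mathrm{id}_V + \mathcal L_{S_2}$ and $\mathcal L_A \mathcal L_{A_0} = \mathrm{id}_V + \mathcal L_{S_1}$, since $\mathcal L_{BC} = \mathcal L_B \circ \mathcal L_C$ and row- and column-finiteness guarantee each of these is a genuine endomorphism of $V$. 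The one external input I would lean on is part (1) of Lemma \ref{fredholm dimension}'s predecessor (the first lemma): because $S_1, S_2 \in M_\infty(K)$, the images $\im \mathcal L_{S_1}$ and $\im \mathcal L_{S_2}$ are finite-dimensional.

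For the kernel, I would take $x \in \ker(A)$, so $\mathcal L_A(x) = 0$, and apply $\mathcal L_{A_0}$ to obtain $0 = (\mathrm{id}_V + \mathcal L_{S_2})(x) = x + S_2 x$. Hence $x = -S_2 x \in \im \mathcal L_{S_2}$, which gives the containment $\ker(A) \subseteq \im \mathcal L_{S_2}$; finite-dimensionality of $\ker(A)$ then follows at once from the finite-rank property of $M_\infty(K)$.

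For the cokernel the argument is dual but slightly less direct. For arbitrary $v \in V$ the relation $A A_0 = I_\infty + S_1$ yields $v + S_1 v = A(A_0 v) \in \im \mathcal L_A$, so in the quotient $\coker(A) = V/\im \mathcal L_A$ the class of $v$ equals the class of $-S_1 v$. Writing $q : V \to \coker(A)$ for the canonical surjection, this says $q = -\,q \circ \mathcal L_{S_1}$, whence $\coker(A) = \im q = q(\im \mathcal L_{S_1})$. Since $\im \mathcal L_{S_1}$ is finite-dimensional, so is its image under $q$, and therefore $\coker(A)$ is finite-dimensional.

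The computations are short, so the main thing to get right is bookkeeping rather than a deep obstacle. I must confirm that the matrix identities survive translation into the language of the transformations $\mathcal L$ (i.e. that composition and the identity match up correctly, which is precisely where row- and column-finiteness are used), and, for the cokernel, spot the factorization of the quotient map through $\mathcal L_{S_1}$ rather than trying to describe $\im \mathcal L_A$ directly. Both finiteness conclusions then reduce to the single fact that left multiplication by a matrix in $M_\infty(K)$ has finite-dimensional image.
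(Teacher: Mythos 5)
Your proof is correct, and it rests on the same two inputs as the paper's (the identities $A_0A = I_\infty + S_2$ and $AA_0 = I_\infty + S_1$, plus the fact that matrices in $M_\infty(K)$ have finite-dimensional image), but the kernel half is executed differently. The paper argues by contradiction: it assembles a hypothetical infinite linearly independent subset of $\ker(A)$ into an infinite matrix $B$ with $AB=0$, multiplies on the left by $A_0$ to get $B = -RB$, and observes that the right-hand side has only finitely many nonzero rows while $B$ cannot. Your direct containment $\ker(A) \subseteq \im \mathcal L_{S_2}$ extracts the same information vector by vector; it is shorter, avoids the detour through an auxiliary infinite matrix, and makes the symmetry with the cokernel half visible. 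For the cokernel the two arguments coincide in substance, since both reduce $\coker(A)$ to a quotient of the finite-dimensional space $\im \mathcal L_{S_1}$, but your formulation via $q = -\,q \circ \mathcal L_{S_1}$ is in fact the more careful one: the paper identifies $\coker(I_\infty - S)$ with the set of $v$ satisfying $v = Sv$, which is really $\ker(I_\infty - S)$, whereas the correct statement is that every class of $V/\im(I_\infty - S)$ is represented by an element of $\im(S)$ --- exactly what you prove. Nothing is missing from your write-up.
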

\begin{proof}If $A$ is Fredholm, then there exists $A_0\in \rcfm K$ and $R, S \in M_\infty(K)$ be matrices such that $A_0A = I_\infty - R$ and $AA_0 = I_\infty - S$.

To show that $\ker(A)$ is finite dimensional, suppose that there is an infinite, linearly independent set of elements in $\ker(A)$, $\{b_i \st i \in \zplus\}$. Then $Ab_i = 0$ for each $i \in \zplus$. Construct a matrix $B = (b_1 \; |\; b_2\; |\; \cdots\; )$. By construction $AB = 0$.  Then
\[0 = A_0(AB) = (A_0A)B = B + RB.\] Thus $B = -RB$; this is a contradiction since $RB$ is a matrix with finitely many nonzero rows, but $B$, being constructed from an infinite linearly independent set of vectors, must have infinitely many nonzero rows. Thus for a Fredholm matrix $A$, $\ker(A)$ must be finite dimensional. 

For the other claim, first note that $\im(AA') \subseteq \im(A)$; thus $\im(I_\infty - S) \subseteq \im(A)$ which means that $\coker(A) \subseteq \coker(I_\infty - S)$. Thus 
\[\dim(\coker(A)) \leq \dim(\coker(I_\infty - S)).\]
We claim that $\dim(\coker(I_\infty - S))$ is finite-dimensional. Note that
\[\coker(I_\infty - S) = V / \im(I_\infty - S) = \{v \in V \st v = S v\} \subseteq \im(S).\] Since the dimension of $\im(S)$ is finite, the dimension of the cokernel must be finite also, which proves the claim.
\end{proof}

\begin{rmk}
In the study of Fredholm operators in functional analysis, the corresponding result to the previous is a bijection. However, it is an open question as to whether this result can be extended similarly. The key problem is the existence of row and column finite matrices which are not invertible in $\rcfm K$ but are invertible in $\cfm K$. An example of one such pair of matrices is the following:
\[P = \begin{pmatrix}
1 &-1 &0 &\cdots\\
0 &1 &-1 &\cdots\\
0 &0 &1 &\cdots\\
\vdots &\vdots &\vdots &\ddots
\end{pmatrix} \text{ and } P^{-1} = \begin{pmatrix}
1 &1 &1 &\cdots\\
0 &1 &1 &\cdots\\
0 &0 &1 &\cdots\\
\vdots &\vdots &\vdots &\ddots
\end{pmatrix}.\]
In the theory of Banach spaces, one may appeal to the Bounded Inverse Theorem which guarantees that a bounded operator $T: X \rightarrow Y$ which has $\im(T) = Y$ and $\ker(T) = \{0\}$ has an inverse $T^{-1}$ which is also bounded. The proof of the analytic analogue involves restricting the Fredholm operator $T$ to a operator which is guaranteed to have a bounded inverse.
\end{rmk}

\begin{ex}
Many of the intuitions developed in a linear algebra course break down in the face of infinite matrices. The dimensions of the kernel and cokernel of a linear transformation is precisely one of those cases. Recall the rank-nullity theorem; given an endomorphism $f$ of an $n$-dimensional $K$-vector space $V$, $\dim(\im(f)) + \dim(\ker(f)) = n$. The dimension of the cokernel $V/f(V)$ equals the dimension of the kernel since \[\dim(\coker(f)) = n - \dim(\im(f)) = \dim(\ker(f)).\]

To see how this intuition fails in the case of infinite matrices, consider the matrix $S_{-1} = \sum_{i = 1}^\infty e_{i,i+1}$ where $e_{ij}$ is the matrix unit with $1$ in the $(i,j)\th$ entry and zeroes elsewhere. The matrix $S_{-1}$ can be visualized as the infinite matrix which has $1$'s along the first super-diagonal. It's clear that $S_{-1}$ is Fredholm with inverse $S_1 = \sum_{j = 1}^\infty e_{j+1,j}$. The kernel of $\mathcal L_{S_{-1}}$ is the following subspace of $V$, $\{(k, 0, 0, \ldots)^T \st k \in K\}$, which has dimension 1. However the cokernel $V/\im(S_1) = \{0\}$, which has dimension 0. We'll return to this matrix and other similar matrices after the following definition.
\end{ex}

Since we can think of Fredholm matrices as those which are ``almost invertible," we propose the following as a measurement of how close a given Fredholm matrix is to being invertible

\begin{defn}
Let $A$ be a Fredholm matrix, we define the {\bf index} of $A$ to be 
\[\ind(A) = \dim(\ker(A)) - \dim(\coker(A)).\]
\end{defn}

\begin{ex}\label{Si generators}
Define the following set of matrices. For an integer $i$, $S_i$ will denote the matrix \[S_i = \sum_{j = 1}^\infty e_{i+j,j} \text{ for all } j \text{ such that } j>-i.\] These are the matrices which have $1$'s along the $i\th$ sub-diagonal and zeroes elsewhere; in the case where $i < 0$, then the matrices have $1$'s along the $-i\th$ super-diagonal. Because these matrices amount to shift operators on the infinite-dimensional vector space $V$, it is straightforward to calculate kernels and cokernels of the matrices and see that 
\[\ind(S_i) = -i\]
\end{ex}
We now establish the following facts about the index.
\begin{prop}\label{fredholm properties}
Let $A$ and $B$ be Fredholm matrices and let $T \in M_\infty(K)$, and let $A_0$, $R$, and $S$ be as in the proof of Lemma \ref{fredholm dimension}.
\begin{enumerate}
    \item $\ind(AB) = \ind(A) + \ind(B)$.
	\item $\ind(A_0) = - \ind(A)$. 
    \item $A+T$ is Fredholm, and $\ind(A + T) = \ind(A)$.
\end{enumerate}
\end{prop}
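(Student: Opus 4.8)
The plan is to prove the three statements slightly out of order: I will first establish the additivity in (1) by a homological dimension count, then isolate the single fact that a finite-rank perturbation of the identity has index zero, after which (2) and (3) both fall out formally. For (1), observe that $\mathcal{L}_{AB} = \mathcal{L}_A \circ \mathcal{L}_B$, so I would assemble the six-term sequence
\[
0 \to \ker(B) \hookrightarrow \ker(AB) \xrightarrow{\;\mathcal{L}_B\;} \ker(A) \xrightarrow{\;\delta\;} \coker(B) \xrightarrow{\;\overline{\mathcal{L}_A}\;} \coker(AB) \xrightarrow{\;q\;} \coker(A) \to 0,
\]
in which the first map is the inclusion, $\delta$ sends $y \in \ker(A)$ to $y + \im(B)$, the map $\overline{\mathcal{L}_A}$ is induced by $\mathcal{L}_A$ (well defined because $\mathcal{L}_A(\im(B)) = \im(AB)$), and $q$ is the projection arising from $\im(AB) \subseteq \im(A)$. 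After verifying exactness I would invoke Lemma \ref{fredholm dimension} to see that every term is finite-dimensional, so the alternating sum of the dimensions vanishes; solving that relation for $\dim(\ker(AB)) - \dim(\coker(AB))$ returns exactly $\ind(A) + \ind(B)$.

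The heart of the argument is the claim that for every $F \in M_\infty(K)$ the matrix $I_\infty + F$ is Fredholm with $\ind(I_\infty + F) = 0$. It is Fredholm since $\pi(I_\infty + F) = \pi(I_\infty)$ is invertible. To compute the index, choose $N$ so large that every nonzero entry of $F$ lies in the top-left $N \times N$ block and decompose $V = W \oplus W'$, where $W$ is spanned by the first $N$ standard basis vectors of $V$ and $W'$ by the remaining ones. Then $\mathcal{L}_F$ kills $W'$ and carries $V$ into $W$, so $\mathcal{L}_{I_\infty + F}$ is block diagonal: an endomorphism $g$ of the finite-dimensional space $W$ on the first factor and the identity on $W'$. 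Hence $\ker(I_\infty + F) = \ker(g)$ and $\coker(I_\infty + F) \cong \coker(g)$, and rank-nullity applied to $g$ on the finite-dimensional space $W$ gives $\dim(\ker(g)) = \dim(\coker(g))$, so the index vanishes.

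With (1) and this claim in hand, (2) and (3) follow quickly. First, $A_0$ is itself Fredholm because $\pi(A_0) = \pi(A)^{-1}$ is invertible. Since $A_0 A = I_\infty - R$ with $R \in M_\infty(K)$, the claim and (1) give $\ind(A_0) + \ind(A) = \ind(A_0 A) = \ind(I_\infty - R) = 0$, which is (2). For (3), $\pi(A + T) = \pi(A)$ is invertible, so $A + T$ is Fredholm; moreover $A_0(A + T) = I_\infty - (R - A_0 T)$ with $R - A_0 T \in M_\infty(K)$ because $M_\infty(K)$ is an ideal, so the claim and (1) again yield $\ind(A_0) + \ind(A + T) = 0$. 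Comparing with (2) gives $\ind(A + T) = -\ind(A_0) = \ind(A)$.

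I expect the genuine obstacle to be the index-zero claim rather than the surrounding bookkeeping: the real insight is that every index computation in the proposition collapses, through the block decomposition, to ordinary rank-nullity on a finite-dimensional block. Checking exactness of the six-term sequence in (1) is routine, the only delicate junctions being the two flanking the connecting map $\delta$, where one must confirm that $\im(\delta) = (\ker(A) + \im(B))/\im(B)$ coincides with $\ker(\overline{\mathcal{L}_A})$.
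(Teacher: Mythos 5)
Your proof is correct. The logical skeleton for parts (2) and (3) matches the paper's --- both reduce everything to multiplicativity of the index plus the fact that $\ind(I_\infty - R) = 0$ for $R \in M_\infty(K)$, and your algebra $A_0(A+T) = I_\infty - (R - A_0T)$ is literally the paper's --- but you handle the two computational cores differently, and in both cases your version is arguably cleaner. For (1), the paper carries out the dimension count by hand, choosing explicit complements ($V_1 = \ker(A) \cap \im(B)$, $\ker(AB) = \ker(B) \oplus W$, $\im(A) = \im(AB) \oplus X$, and so on) and matching dimensions pairwise; your six-term exact sequence is the standard homological packaging of exactly that bookkeeping, and the alternating-sum identity replaces the paper's case-by-case matching. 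The two arguments are mathematically equivalent, but yours localizes all the work in routine exactness checks, the only nontrivial one being the junction at $\coker(B)$ that you correctly flag. For the index-zero claim, the paper asserts $\ker(I_\infty - R) = \{v \st v = Rv\} = \coker(I_\infty - R)$, which conflates the quotient $V/\im(I_\infty - R)$ with a particular complement of the image and is stated rather loosely; your block decomposition $V = W \oplus W'$, under which $\mathcal{L}_{I_\infty + F}$ acts as a finite-dimensional endomorphism $g$ on $W$ and as the identity on $W'$, reduces the claim honestly to rank-nullity on $W$ and is the more rigorous route. One small bookkeeping point: to know all six terms of your sequence are finite-dimensional you should cite either the paper's earlier proposition that Fredholm matrices are closed under multiplication (so Lemma \ref{fredholm dimension} applies to $AB$) or, more directly, observe that exactness sandwiches $\ker(AB)$ and $\coker(AB)$ between finite-dimensional spaces.
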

\begin{proof}
To prove (1), we divide $V$ up into four subspaces $V_1$, $V_2$, $V_3$, and $V_4$.
\[\begin{array}{r c l}
V_1 &= &\ker(A) \cap \im(B),\\
\im(B) &= &V_1 \oplus V_2,\\
\ker(A) &= &V_1 \oplus V_3 \text{, and from here we get}\\
V &= &\im(B) \oplus V_3 \oplus V_4.
\end{array}\]

Note that by Lemma \ref{fredholm dimension}, $V_1$, $V_3$, and $V_4$ are finite dimensional subspaces of $V$ since they are subspaces of $\ker(A)$ or $\coker(B)$ which are finite dimensional. Let $d_i = \dim(V_i)$ for $i \in \{1, 3, 4\}$. In additon we can find two more subspaces $W, X \subseteq V$ by writing $\ker(AB) = \ker(B) \oplus W$ and $\im(A) = \im(AB) \oplus X$. Since $W \subseteq \ker(AB)$ and $X \subseteq \coker(AB)$, both $W$ and $X$ are finite dimensional. Note that $W$ is the subspace of all vectors $v$ such that $v \in \im(B)$ but $v \in \ker(A)$, so $\dim(W) = d_1$. Also note that $\im(A) = \mathcal{L}_A(V) = \mathcal{L}_A(\im(B) \oplus V_3 \oplus V_4) = \im(AB) \oplus \mathcal{L}_A(V_4)$. Since $\ker(A) = V_1 \oplus V_3$, $\mathcal{L}_A$ must be a one-to-one linear transformation from $V_4$ to $W$ which implies that $V_4$ and $X$ must have the same dimension, $\dim(X) = d_4$.

Collecting our work from the previous paragraphs, we have that 
\[\begin{array}{r c l}
\dim(\ker(AB) &= &\dim(\ker(B)) + d_1\\
\dim(\coker(AB)) &= &\dim(\coker(A)) + d_4\\
\dim(\ker(A)) &= &d_1 + d_3\\
\dim(\coker(B) &= &d_3 + d_4
\end{array}.\]
So we calculate $\ind(AB) = \dim(\ker(B)) + d_1 - \dim(\coker(A)) - d_4$. On the other hand, $\ind(A) + \ind(B) = \dim(\ker(A)) - \dim(\coker(A)) + \dim(\ker(B)) - \dim(\coker(B)) = d_1 + d_3 - \dim(\coker(A)) + \dim(\ker(B)) - d_3 + d_4$, which gives the desired equality.

The proof of (2) follows from the fact that $\ker(I_\infty - R) = \{v \in V \st v - R v = 0\} = \{v \in V \st v = R v\} = \coker(I_\infty - R)$. Because those subspaces have finite dimension, we calculate 
\[0 = \ind(I_\infty - S) = \ind(A_0 A) = \ind(A_0) + \ind(A).\]

To show that (3) holds, define $R' = (R -A_0T)$ and $S' = (S - T A_0)$, and note \[\begin{array}{r c l} A_0(A + T) &= &I_\infty - R + A_0 T = I_\infty - R'\\
(A+T)A_0 &= &I_\infty - S + TA_0 = I_\infty - S'
\end{array}.\] Thus $A+T$ is Fredholm (with the same inverse to boot). Finally,
\[\ind(A_0) + \ind(A+T) = \ind(A_0(A+T) = \ind(I_\infty - R') = 0.\] Since $\ind(A_0) = - \ind(A)$, we have that $\ind(A) = \ind(A+T)$. 
\end{proof}

We finish this section with the following corollary.
\begin{cor}\label{conjugation}
Let $U$ be an invertible matrix in $\rcfm K$, and let $A$ and $B$ be Fredholm matrices. Furthermore, let $B_0$ be a Fredholm inverse of $B$. Then the following properties hold.
\begin{enumerate}
\item $\ind(U) = 0$
\item $\ind(A) = \ind(U^{-1} A U)$.
\item $\ind(A) = \ind(B_0 A B)$.
\end{enumerate}
\end{cor}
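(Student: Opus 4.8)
The plan is to derive all three statements from the additivity of the index established in Proposition \ref{fredholm properties}, after first settling part (1) directly, since it is the base case on which the other two rest.

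First I would prove (1). Because $U$ is invertible in $\rcfm K$, there is a genuine inverse matrix $U^{-1} \in \rcfm K$ with $UU^{-1} = U^{-1}U = I_\infty$; in particular $\pi(U)$ is invertible in $Q(K)$, so $U$ is Fredholm and $\ind(U)$ is defined. Since $\mathcal{L}_U \circ \mathcal{L}_{U^{-1}} = \mathcal{L}_{UU^{-1}} = \mathcal{L}_{I_\infty}$ is the identity on $V$, and likewise $\mathcal{L}_{U^{-1}} \circ \mathcal{L}_U$, the transformation $\mathcal{L}_U$ is a bijection of $V$. Hence $\ker(U) = \{0\}$ and $\im(U) = V$, so both the kernel and the cokernel are trivial and $\ind(U) = 0$.

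For (2), I would first observe that $U$, $U^{-1}$, and $A$ are each Fredholm, so by closure of the Fredholm matrices under multiplication $U^{-1}AU$ is Fredholm and its index is defined. Applying Proposition \ref{fredholm properties}(1) twice gives $\ind(U^{-1}AU) = \ind(U^{-1}) + \ind(A) + \ind(U)$. Since $U^{-1}$ is itself invertible in $\rcfm K$, part (1) yields $\ind(U) = \ind(U^{-1}) = 0$, and the equation collapses to $\ind(U^{-1}AU) = \ind(A)$.

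Statement (3) follows by the same additivity, with the use of part (1) replaced by Proposition \ref{fredholm properties}(2). Here $B_0$ is Fredholm because its image in $Q(K)$ is the inverse of the invertible element $\pi(B)$, so $B_0AB$ is a product of Fredholm matrices and is therefore Fredholm. Additivity gives $\ind(B_0AB) = \ind(B_0) + \ind(A) + \ind(B)$, and since $\ind(B_0) = -\ind(B)$, the outer terms cancel to leave $\ind(B_0AB) = \ind(A)$. The only point requiring care throughout is the bookkeeping that every matrix appearing has invertible image in $Q(K)$, so that each index is defined; once this is in place the computations are immediate, and I do not anticipate a substantive obstacle beyond the direct argument in part (1).
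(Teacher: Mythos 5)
Your proposal is correct and follows essentially the same route as the paper: part (1) from the triviality of the kernel and cokernel of an invertible matrix, and parts (2) and (3) from the additivity of the index together with $\ind(U)=\ind(U^{-1})=0$ and $\ind(B_0)=-\ind(B)$ from Proposition \ref{fredholm properties}. You simply spell out the bookkeeping (that each factor is Fredholm so each index is defined) that the paper leaves implicit.
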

\begin{proof}
The first claim follows from the fact that if $U$ is invertible, its kernel and cokernel are trivial. The second follows from the first, and the third follows from Proposition \ref{fredholm properties}.
\end{proof}

\section{Embeddings of the Toeplitz-Jacobson Algebra into B(K)}

The {\bf Toeplitz-Jacobson algebra} is the $K$-algebra with presentation
\[\mathcal T = \langle x,y \st xy = 1\rangle\] which was first investigated by Jacobson in \cite{onesidedinverses}. This algebra can be thought of as being generated by an element $x$ which is right invertible, but not left invertible and its right inverse $y$. Jacobson's article also included the first of the following embedding of $\mathcal T$ into the $K$-algebra of row and column finite matrices $\rcfm K$.

\begin{ex} (\cite{onesidedinverses}, Equation 6)
Recall the definition of $S_i$ from Example \ref{Si generators}:
\[S_i = \sum_{j = 1}^\infty e_{i+j,j} \text{ for all } j \text{ such that } j>-i.\] One can see that there is an isomorphism $\Phi: \mathcal T \rightarrow \langle S_{-1}, S_1 \rangle \subseteq \rcfm K$ such that  $\Phi(x) = S_{-1}$ and $\Phi(y) = S_1$. For ease of reference, we will call this embedding the ``Jacobson embedding" of $\mathcal T$. 

However, the Jacobson embedding of $\mathcal T$ is far from the only embedding of $\mathcal T$ into $\rcfm K$. Two more are given by $\Psi, \Xi: \mathcal T \rightarrow \rcfm K$, where 
\[\Psi(x) = T_{-1} = \sum_{i=1}^\infty \frac{1}{i+1}e_{i,i+1} \text{ and } \Psi(y) = T_{1} = \sum_{j =1}^\infty (j+1) e_{j+1,j}\]
and 
\[\Xi(x) = S_{-2} \text{ and } \Xi(y) = S_2.\]
\end{ex}

From Theorem 4 of \cite{onesidedinverses} and the construction of the three homorphisms in the previous example, we have the following fact:
\begin{prop}
Each of the three homomorphisms $\Phi$, $\Psi$, and $\Xi$ from the previous example are injective maps embedding $\mathcal T$ into $\rcfm K$.
\end{prop}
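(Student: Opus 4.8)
The plan is to reduce injectivity to a single linear-independence statement via the monomial basis of $\mathcal{T}$ supplied by Theorem 4 of \cite{onesidedinverses}, and then to verify that statement for each of the three maps: directly for $\Phi$, and by comparison with $\Phi$ for $\Psi$ and $\Xi$.

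Repeated use of $xy = 1$ rewrites any word in $x$ and $y$ so that every $y$ precedes every $x$, so $\{y^i x^j : i,j \ge 0\}$ spans $\mathcal{T}$; the structural content of Jacobson's Theorem 4 is that these monomials are linearly independent, hence form a $K$-basis. Consequently, for any unital homomorphism $\phi : \mathcal{T} \to \rcfm K$ determined by $\phi(x) = X$ and $\phi(y) = Y$ with $XY = I_\infty$, the map $\phi$ is injective if and only if the family $\{Y^i X^j : i, j \ge 0\}$ is $K$-linearly independent. Since $S_{-1}S_1 = T_{-1}T_1 = S_{-2}S_2 = I_\infty$, all three maps are well defined, and it remains only to check the independence of the corresponding images.

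For $\Phi$ a short computation shows that the $(m,n)$ entry of $S_1^i S_{-1}^j$ equals $1$ exactly when $m - n = i - j$ and $n \ge j+1$, and is $0$ otherwise; thus each image is supported on a single diagonal, and along a fixed diagonal the matrices attached to distinct pairs $(i,j)$ begin in distinct columns. Given a finite relation $\sum c_{ij} S_1^i S_{-1}^j = 0$, one reads the entries down a fixed diagonal $d = i-j$ and takes successive differences; the difference between the entries in columns $n$ and $n-1$ is exactly $c_{d+(n-1),\,n-1}$, so every coefficient vanishes and $\Phi$ is injective. For $\Psi$ I would exhibit it as a conjugate of $\Phi$: with $D = \diag(1!, 2!, 3!, \dots)$, which is invertible in $\rcfm K$ with inverse $\diag(1/n!)$ again row- and column-finite, one computes $D S_1 D^{-1} = T_1$ and $D S_{-1} D^{-1} = T_{-1}$, so $\Psi$ equals $\Phi$ followed by the inner automorphism $M \mapsto D M D^{-1}$ and is therefore injective. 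For $\Xi$ the same diagonal-support argument applies with only cosmetic changes — now $S_2^i S_{-2}^j$ is supported on the even diagonal $2(i-j)$ beginning in column $2j+1$ — or, alternatively, splitting $V$ into its even- and odd-indexed coordinates identifies $\Xi$, after conjugating by the sorting permutation, with $\Phi \oplus \Phi$, whose faithfulness is inherited from that of $\Phi$.

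The one point I would be careful about is the independence bookkeeping: a single matrix entry receives contributions from several basis images at once (all pairs on a common diagonal with small enough second index), so no coefficient can be read from a single entry in isolation. The difference-along-a-diagonal argument is precisely what resolves this, and it is also the mechanism underlying Jacobson's Theorem 4; the nonzero weights in $\Psi$ do not affect it, and the passage to even diagonals in $\Xi$ merely relabels indices.
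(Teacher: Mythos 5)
Your argument is correct, but it is considerably more self-contained than what the paper does: the paper offers no written proof at all, deferring entirely to Theorem~4 of Jacobson's article together with the defining relations $S_{-1}S_1 = T_{-1}T_1 = S_{-2}S_2 = I_\infty$. The intended shortcut there is that Jacobson's matrix-unit construction ($e_{ij} = y^i x^j - y^{i+1}x^{j+1}$) shows every nonzero two-sided ideal of $\mathcal T$ contains $1 - yx$, so a unital homomorphism $\phi$ with $\phi(x)\phi(y) = 1$ is injective precisely when $\phi(y)\phi(x) \neq 1$; one then only has to observe that $S_1S_{-1} = I_\infty - e_{11}$, $T_1T_{-1} = I_\infty - e_{11}$, and $S_2S_{-2} = I_\infty - e_{11} - e_{22}$ are all different from $I_\infty$. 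You instead take the other structural consequence of Theorem~4 --- that $\{y^i x^j\}$ is a $K$-basis --- and verify linear independence of the images directly. Your computation of the support of $S_1^i S_{-1}^j$ (indeed $S_1^i S_{-1}^j = \sum_{n\ge 1} e_{n+i,\,n+j}$), the difference-along-a-diagonal extraction of coefficients, the conjugation $D S_{\pm 1} D^{-1} = T_{\pm 1}$ with $D = \diag(1!,2!,\dots)$ (which the paper itself uses later to show $\Phi$ and $\Psi$ are equivalent embeddings), and the parity splitting identifying $\Xi$ with $\Phi\oplus\Phi$ are all correct; you also rightly flag that individual matrix entries mix several coefficients, which is exactly why the differencing is needed. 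What your route buys is an explicit, verifiable argument that does not require knowing the ideal lattice of $\mathcal T$; what the paper's route buys is brevity and a criterion ($\phi(yx)\neq 1$) that disposes of all three embeddings, and indeed any future one, in a single line. Both are valid; if you want to match the paper's economy, it is worth recording the ideal-theoretic criterion explicitly, since your diagonal computation essentially re-proves it in the special case at hand.
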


We close this article with an example which shows how the index may be used to classify and distinguish between these three embeddings.

\begin{ex}
In \cite{Bossaller1}, the author defines two embeddings $E_1$ and $E_2$ of $\mathcal T$ into $\rcfm K$ as {\bf equivalent} if there is an invertible $U \in \rcfm K$ which conjugates $E_1$ into $E_2$.

With this notion of equivalence in mind, one can see that the embeddings determined $\Phi$ and  $\Psi$ share an equivalence class. The matrix \[U = \diag(1, 2, 6, \ldots, n!, \ldots)\] is an invertible row and column finite matrix such that $US_i U^{-1} = T_i$ for $i \in \{1,2\}$. Because the $S_i$ and $T_j$ are generators for $\Phi(\mathcal T)$ and $\Psi( \mathcal T)$, respectively, $U$ must conjugate the first into the second, thus the embeddings share the same equivalence class.

Note however that $\Phi(\mathcal T)$ and $\Xi(\mathcal T)$ cannot share the same equivalence class. For if there were an invertible row and column finite matrix $V$ which conjugates $S_1$ into $S_2$ and $S_{-1}$ into $S_{-2}$, Corollary \ref{conjugation} would then force $S_1$ and $S_2$ to have the same index, which is not true by Example \ref{Si generators}.
\end{ex}
\bibliographystyle{plain}
\bibliography{fredholm}
\end{document}